\documentclass[11pt,a4paper]{amsart}

\usepackage{style}
\usepackage[left=3.4cm,right=3.4cm,bottom=3.4cm,top=3.4cm]{geometry}

\usepackage{times}
\usepackage{diagbox}

\author{Gebhard Martin}
\address{Mathematisches Institut \\ Universität Bonn \\ Endenicher Allee 60 \\ 53115 Bonn \\ Germany}
\email{gmartin@math.uni-bonn.de} 

\author{Réka Wagener}
\address{}
\email{rekawagener@t-online.de}

\title{Classification of non-$F$-split del Pezzo surfaces of degree $1$}

\date{\today}
\subjclass[2020]{}
\keywords{}

\begin{document}

\begin{abstract}
Using Fedder's criterion, we classify all non-$F$-split del Pezzo surfaces of degree $1$. We give a necessary and sufficient criterion for the $F$-splitting of such del Pezzo surfaces in terms of their anti-canonical system.
\end{abstract}

\maketitle

\tableofcontents

\section{Introduction}
Let $k$ be an algebraically closed field of characteristic $p > 0$. Recall that a \emph{del Pezzo surface} over $k$ is a smooth projective surface $X$ over $k$ with ample anti-canonical sheaf $\omega_X^{-1}$. The \emph{degree} of $X$ is defined as $d \coloneqq K_X^2$ and it satisfies $1 \leq d \leq 9$. A scheme $S$ over $k$ is called \emph{$F$-split} if the $k$-linear Frobenius $F: \mathcal{O}_S \to F_* \mathcal{O}_S$ splits as a morphism of $\mathcal{O}_S$-modules.
The following is known about non-$F$-split del Pezzo surfaces:

\begin{theorem} \label{thm: known}
Let $X$ be a del Pezzo surface of degree $d$ over an algebraically closed field $k$ of characteristic $p > 0$. Then, the following hold:
\begin{enumerate}
    \item \cite[Example 5.5]{Hara} If $X$ is not $F$-split, then the tuple $(d,p)$ satisfies one of the following:
    \begin{enumerate}
    \item $d = 3$, $p = 2$
    \item $d = 2$, $p \in \{2,3\}$.
    \item $d = 1$, $p \in \{2,3,5\}$.
    \end{enumerate}
    \item \cite[Theorem 1.1]{Homma} If $d = 3$ and $p = 2$, then the following are equivalent:
    \begin{enumerate}
    \item $X$ is not $F$-split.
    \item $X$ is isomorphic to the Fermat cubic surface.
    \item All smooth members of $|-K_X|$ are supersingular.
    \end{enumerate}
    \item \cite{Saito} If $d = 2$ and $p = 3$, then the following are equivalent:
    \begin{enumerate}
    \item $X$ is not $F$-split.
    \item $X$ is isomorphic to a double cover of $\mathbb{P}^2$ branched over the Fermat quartic curve.
    \item All smooth members of $|-K_X|$ are supersingular.
    \end{enumerate}
     \item \cite{Saito} If $d = 2$ and $p = 2$, then the following are equivalent:
     \begin{enumerate}
    \item $X$ is not $F$-split.
    \item $X$ is isomorphic to a double cover of $\mathbb{P}^2$ branched over a double line.
    \item All smooth members of $|-K_X|$ are supersingular.
     \end{enumerate}
\end{enumerate}
\end{theorem}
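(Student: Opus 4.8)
Since each of the four parts is attributed to a published source, the most economical ``proof'' merely records the reductions to the cited statements. I would instead sketch a single mechanism that recovers all four and previews the degree-$1$ analysis: the passage from the global $F$-splitting of $X$ to \emph{Fedder's criterion} applied to a (weighted) hypersurface model of $X$. Recall that with the anticanonical polarization the section ring $R = \bigoplus_{m \ge 0} H^0(X, -mK_X)$ is Gorenstein, and that global $F$-splitting of $X$ is equivalent to $F$-purity of $R$ at its irrelevant maximal ideal $\mathfrak{m}$; writing $R = S/(f)$ with $S$ a weighted polynomial ring, Fedder's criterion reads $f^{p-1} \notin \mathfrak{m}^{[p]}$. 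The relevant models are the cubic $f \subseteq \mathbb{P}^3$ for $d = 3$, the quartic double plane $w^2 = g(x_0,x_1,x_2)$ for $d = 2$, and the sextic $w^2 = z^3 + \dots$ in $\mathbb{P}(1,1,2,3)$ for $d = 1$.

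For part (1) I would argue numerically. Dually, a splitting corresponds to a section of $(p-1)(-K_X)$, and via the model this becomes the statement that $f^{p-1}$ has a monomial all of whose exponents are at most $p-1$. Since $\deg f^{p-1} = (\deg f)(p-1)$ while the largest admissible monomial has weighted degree $(\sum_i w_i)(p-1)$, a comparison of these degrees together with a dimension count for $H^0(X,(p-1)(-K_X))$ forces the existence of such a monomial once $p$ is large relative to $d$; tracking the finitely many borderline cases yields exactly the listed pairs $(d,p)$. This is Hara's Example~5.5, and I would cite it rather than redo the estimate.

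For parts (2)--(4) I would run Fedder's criterion explicitly on the models above. In each case $f^{p-1}$ reduces modulo $\mathfrak{m}^{[p]}$ to a single relevant term: for $d = 3$, $p = 2$ one needs a squarefree monomial $x_i x_j x_k$ in the cubic $f$; for $d = 2$, $p = 3$ one needs a monomial of $g$ with all exponents $\le 2$, extracted from the cross term $w^2 g$ in $(w^2 - g)^2$ (using $-2 \equiv 1 \bmod 3$); and the $d = 2$, $p = 2$ case is handled through the Artin--Schreier form of the double cover. Solving the resulting congruence up to projective equivalence of the branch data singles out the Fermat cubic, the cover of the Fermat quartic, and the cover of a double line respectively, giving the equivalences $(a) \Leftrightarrow (b)$ of Homma and Saito.

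The step I expect to be the genuine obstacle is the equivalence with $(c)$, that $X$ fails to be $F$-split precisely when \emph{every} smooth member of $|-K_X|$ is supersingular. The easy direction is restriction: a smooth anticanonical member $C$ is a genus-$1$ curve with $K_C = (K_X + C)|_C = \mathcal{O}_C$ by adjunction, so if $X$ is $F$-split then a suitable compatibly split smooth $C$ inherits a splitting and is therefore ordinary; contrapositively, if all smooth members are supersingular then $X$ is not split. The reverse implication---that a single ordinary smooth member forces $X$ itself to split---requires lifting the splitting of $C$ across the residue sequence and controlling the image of the restriction map on $\mathrm{Hom}_{\mathcal{O}_X}(F_*\mathcal{O}_X, \mathcal{O}_X) \cong H^0(X, (1-p)K_X)$. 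This is where the bare hypersurface computation is insufficient and I would lean on the finer arguments of Homma and Saito; it is also the feature I would most want to reformulate through the anticanonical system, since that is what must be extended to settle the degree-$1$ case.
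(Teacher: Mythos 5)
This theorem is a survey of known results, and the paper offers no proof of it beyond the citations to Hara, Homma, and Saito; your decision to record the reductions to those sources is therefore exactly what the paper does, and the Fedder-based sketch you add is the same mechanism the paper deploys in earnest for the degree-$1$ case.

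Two of the mechanisms you sketch, however, would not work as stated, so you should not lean on them beyond the citations. First, for part (1): comparing $\deg f^{p-1}=(\deg f)(p-1)$ with the maximal admissible weighted degree $(\sum_i w_i)(p-1)$ can never \emph{force} a monomial with all exponents $\le p-1$ to survive in $f^{p-1}$ with nonzero coefficient --- since $\deg f<\sum_i w_i$ in every Fano case, the inequality is always satisfied and carries no information; the actual content of Hara's Example 5.5 is a genuine computation (essentially the one the paper redoes in Lemmas \ref{lem: char5}--\ref{lem: char2}), not a degree count. Second, your ``easy direction'' of $(a)\Leftrightarrow(c)$ assumes that some smooth anticanonical member is \emph{compatibly} split, which is precisely what needs proof: $F$-splitting of $X$ does not hand you a compatibly split smooth member of $|-K_X|$ for free. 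The way the paper actually obtains $(a)\Leftrightarrow(c)$ in degree $1$ is entirely computational and avoids compatibility altogether: Fedder's criterion shows non-$F$-splitting forces the vanishing of exactly the coefficient ($a_4$, $a_2$, or $a_1$) whose vanishing makes the $j$-invariant identically $0$, and in characteristics $2,3,5$ the condition $j=0$ is equivalent to supersingularity fibre by fibre. If you want a route to $(c)$ that generalizes, that identification of ``non-split locus'' with ``$j\equiv 0$'' is the one to extract from Homma and Saito, not the restriction argument.
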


In the cases $(d,p) \in \{(3,2),(2,3)\}$, Homma and Saito also give a description of $X$ as a blow-up of $\mathbb{P}^2$. The goal of this article is to finish the classification of non-$F$-split del Pezzo surfaces by treating the case of degree $1$. 
Recall that in this case, the anti-canonical system $|-K_X|$ is a generically smooth pencil that induces an elliptic fibration on the blow-up of $X$ in the unique base point of $|-K_X|$. We let $\Delta \subseteq |-K_X| \cong \mathbb{P}^1$ be the vanishing locus of the discriminant of the induced fibration, considered as a divisor of degree $12$ (with multiplicities) on $\mathbb{P}^1$. With this terminology, we will prove the following:

\begin{theorem} \label{thm: main theorem}
Let $X$ be a del Pezzo surface of degree $1$ over an algebraically closed field $k$ of characteristic $p > 0$. Then, the following hold:
\begin{enumerate}
\item If $p = 5$, then the following are equivalent:
\begin{enumerate}
\item $X$ is not $F$-split.
\item $X$ is isomorphic to a double cover of $\mathbb{P}(1,1,2)$ branched over a sextic given by the equation $$x^3 + s^5t - st^5.$$
\item All smooth members of $|-K_X|$ are supersingular and $\Delta$ is projectively equivalent to $2\mathbb{P}^1(\mathbb{F}_5)$.
\end{enumerate}
\item If $p = 3$, then the following are equivalent:
\begin{enumerate}
\item $X$ is not $F$-split.
\item $X$ is isomorphic to a double cover of $\mathbb{P}(1,1,2)$ branched over a sextic given by an equation of one of the following forms:
\begin{eqnarray*} \quad \quad
x^3 + s^4x + a_6, \quad\quad
x^3 + s^3tx + a_6, \quad\quad
x^3 + (s^3t - st^3)x + a_6.
\end{eqnarray*}
\item All smooth members of $|-K_X|$ are supersingular and $\Delta$ is projectively equivalent to $12 [0:1]$, $9 [0:1] + 3 [1:0]$, or $3\mathbb{P}^1(\mathbb{F}_3)$.
\end{enumerate}
\item If $p = 2$, then the following are equivalent:
\begin{enumerate}
\item $X$ is not $F$-split.
\item $X$ is isomorphic to a separable double cover of $\mathbb{P}(1,1,2)$ branched over a cubic given by an equation of one of the following forms:
\begin{eqnarray*}
s^3, \quad\quad s^2t, \quad \quad s^2t + st^2
\end{eqnarray*}
\item All smooth members of $|-K_X|$ are supersingular.
\end{enumerate}
\end{enumerate}
\end{theorem}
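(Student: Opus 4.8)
The plan is to reduce the global $F$-splitting of $X$ to a single application of Fedder's criterion on the anti-canonical cone, and then to read off conditions (b) and (c) from the outcome of that one computation. First I would fix a relative Weierstrass model. Since $|-K_X|$ is a pencil with a single base point, Riemann--Roch shows that the anti-canonical ring $R=\bigoplus_{m\ge 0}H^0(X,-mK_X)$ is generated in degrees $1,2,3$ with a single relation in degree $6$, realizing $X$ as a sextic hypersurface $\{W=0\}$ in $\mathbb{P}(1,1,2,3)=\mathrm{Proj}\,S$, where $S=k[s,t,x,w]$ with $\deg(s)=\deg(t)=1$, $\deg(x)=2$, $\deg(w)=3$. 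Here $W=w^2+(a_1x+a_3)w-(x^3+a_2x^2+a_4x+a_6)$ with $a_i\in k[s,t]_i$ is a Weierstrass equation for the elliptic fibration on the blow-up, and the projection forgetting $w$ exhibits $X$ as the (separable) double cover of $\mathbb{P}(1,1,2)$ appearing in (b). In characteristic $\ne 2$ one completes the square to remove $a_1,a_3$, and in characteristic $\ne 2,3$ one additionally completes the cube to remove $a_2$. I would then invoke the equivalence between global $F$-splitting of a Fano variety and $F$-purity of its anti-canonical cone (the viewpoint underlying \cite{Hara}): by Fedder's criterion, $X$ is non-$F$-split if and only if $W^{p-1}\in\mathfrak{m}^{[p]}$, where $\mathfrak{m}=(s,t,x,w)$ and $\mathfrak{m}^{[p]}=(s^p,t^p,x^p,w^p)$, i.e.\ every monomial of $W^{p-1}$ is divisible by some $p$-th power of a variable.

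The computational heart is to expand $W^{p-1}$ modulo $\mathfrak{m}^{[p]}$ for $p\in\{2,3,5\}$. The key simplification is that very few monomials of weighted degree $6(p-1)$ have all four exponents $\le p-1$, and tracking the $w$-expansion isolates the obstruction in the low-order coefficients $a_i$. For $p=2$ one finds that the only surviving monomials are $sxw$ and $txw$, so $W\in\mathfrak{m}^{[2]}$ if and only if $a_1=0$. For $p=3$ the surviving monomials all carry $w^2$, and $W^2\in\mathfrak{m}^{[3]}$ reduces to the vanishing of the coefficient of $w^2$ at the relevant degrees, giving $a_2=0$ together with the vanishing of the $s^2t^2$-coefficient of $a_4$. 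For $p=5$ (after completing square and cube) one finds $W^4\in\mathfrak{m}^{[5]}$ if and only if $a_4=0$ and $a_6$ lies in the span of $s^6,s^5t,st^5,t^6$, the binary sextics whose three middle coefficients vanish.

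It then remains to translate these coefficient conditions into the intrinsic statements (b) and (c). The vanishing of the Hasse invariant of a smooth fiber is, in the respective Weierstrass normal forms, exactly $a_1=0$ ($p=2$), $a_2=0$ ($p=3$), $a_4=0$ ($p=5$); this identifies the first half of each Fedder condition with the requirement in (c) that all smooth members of $|-K_X|$ be supersingular. For the discriminant I would use $\Delta=-16(4a_4^3+27a_6^2)$, which in characteristic $3$ reduces to $-a_4^3$ and in characteristic $5$ to a unit times $a_6^2$; the remaining Fedder condition then says precisely that $a_4$ (resp.\ $a_6$) lies in the subspace $V$ of binary forms whose middle coefficients vanish. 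A short check shows that $V$ is $\mathrm{GL}_2$-stable, using $\binom{4}{2}\equiv 0 \pmod 3$, $\binom{6}{2}\equiv\binom{6}{3}\equiv 0\pmod 5$, and the additivity of $p$-th powers; hence the condition is coordinate-free, matching the projective-equivalence class of $\Delta$ in (c). Classifying the $\mathrm{PGL}_2$-orbits on $V$ by root multiplicities then yields the normal forms: for $p=5$ smoothness of $X$ forces $a_6$ to have distinct roots, leaving the single orbit of $\mathbb{P}^1(\mathbb{F}_5)$ (so $\Delta=2\mathbb{P}^1(\mathbb{F}_5)$); for $p=3$ there are three orbits $s^4$, $s^3t$, $s^3t-st^3$, giving $\Delta=12[0:1],\,9[0:1]+3[1:0],\,3\mathbb{P}^1(\mathbb{F}_3)$; and for $p=2$, where Fedder imposes no subspace constraint (explaining the absence of a $\Delta$-clause in (c)), smoothness forces $a_3\ne 0$ and the three $\mathrm{PGL}_2$-orbits of binary cubics give the three branch cubics $s^3,\,s^2t,\,s^2t+st^2$.

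Finally, to produce the explicit double-cover descriptions in (b) I would reduce the residual Weierstrass data to normal form using the remaining coordinate changes $(s,t)\mapsto A(s,t)$, $x\mapsto u^2x+q(s,t)$, $w\mapsto u^3w+r(s,t,x)$, noting that for $p=3$ and $p=2$ a genuine modulus survives in $a_6$ (resp.\ in $a_2,a_4,a_6$), while for $p=5$ the surface is rigid. The main obstacle I anticipate is precisely this low-characteristic bookkeeping: in characteristics $2$ and $3$ one cannot complete the square or cube, so the extra coefficients must be carried through both Fedder's criterion and the normalization, and one must verify that the surviving conditions are intrinsic and match the supersingularity and discriminant data. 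The orbit classification inside the invariant subspaces $V$, together with the smoothness analysis of the Weierstrass model in the characteristics where the naive discriminant degenerates, is the technically delicate part; by contrast, once the surviving monomials are pinned down, the Fedder expansion itself is short.
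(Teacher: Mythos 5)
Your proposal is correct and follows essentially the same route as the paper: Fedder's criterion applied to the anti-canonical section ring $k[s,t,x,y]/(f)$ of the Weierstrass model in $\mathbb{P}(1,1,2,3)$, identification of the few surviving monomials of $f^{p-1}$ modulo $\mathfrak{m}^{[p]}$ (yielding $a_1=0$, resp.\ $a_2=0$ and $a_4\in(s^3,t^3)$, resp.\ $a_4=0$ and $a_6\in(s^5,t^5)$), and then translation via the $j$-invariant/Hasse invariant and the $\mathrm{PGL}_2$-orbit classification of the Frobenius-stable subspace of binary forms, which is the paper's Lemma \ref{lem: LA5}. The only cosmetic differences are your use of the characteristic-zero discriminant reduced mod $p$ in place of the paper's characteristic-$p$ formulas, which is harmless since you apply it only after the relevant coefficients have been killed.
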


\begin{remark}
Assuming that all smooth members of $|-K_X|$ are supersingular is not enough to guarantee that $X$ is not $F$-split in characteristic different from $2$.

For example, if $p = 5$, then every smooth surface given by an equation of the form
$$
y^2 = x^3 + a_6(s,t)
$$
satisfies that the generic member of $|-K_X|$ is supersingular, but $\Delta = V(a_6^2)$ is not projectively equivalent to $2\mathbb{P}^1(\mathbb{F}_5) \subseteq \mathbb{P}^1$ for a general choice of $a_6$. 

Similarly, if $p = 3$, then every smooth surface given by an equation of the form
$$
y^2 = x^3 + a_4(s,t)x + a_6(s,t)
$$
has generic member of $|-K_X|$ supersingular, but $\Delta = V(a_4^3)$ is not projectively equivalent to $3\mathbb{P}^1(\mathbb{F}_3)$ in general, even if $a_4$ has four distinct roots.
\end{remark}

\begin{remark}
There is a very recent notion of $n$-quasi-$F$-splitting studied in \cite{Takamatsu}, where the case $n = 1$ corresponds to the classical notion of $F$-splitting. By \cite[Corollary 4.7]{Takamatsu}, all smooth del Pezzo surfaces are $2$-quasi-$F$-split. In particular, the non-$1$-quasi-$F$-split del Pezzo surfaces appearing in Theorem \ref{thm: known} and Theorem \ref{thm: main theorem} are nevertheless $2$-quasi-$F$-split.
\end{remark}

\begin{remark}
In $(2)(b)$ of Theorem \ref{thm: main theorem}, the substitutions that preserve this type of equation are of the form $x \mapsto x + b_2$ with $b_2$ homogeneous of degree $2$ in $s$ and $t$. In $(3)(b)$ of Theorem \ref{thm: main theorem}, they are of the form $y \mapsto y + b_1x + b_3, x \mapsto b_2$ with $b_i$ homogeneous of degree $i$.
Since del Pezzo surfaces of degree $1$ have finite automorphism group, we can use this (and \cite[Theorem 3.4]{DolgachevMartin2}) to determine the dimensions of the moduli spaces of non-$F$-split del Pezzo surfaces:

\begin{table}[h!]
\begin{center}
\begin{tabular}{|l||*{3}{c|}}\hline
\backslashbox{Characteristic}{Degree}
&\makebox[3em]{$3$}&\makebox[3em]{$2$}&\makebox[3em]{$1$}
\\\hline\hline
$5$ & $-$& $-$ & $0$\\\hline
$3$ & $-$ & $0$& $4$\\ \hline
$2$ & $0$ & $3$& $6$\\\hline
\end{tabular}
\end{center}
\caption{Dimension of the moduli space of non-$F$-split del Pezzo surfaces}
\end{table}
\end{remark}

\bigskip

\begin{remark}
The unique non-$F$-split del Pezzo surface $X$ of degree $1$ in characteristic $5$ is also geometrically interesting from other perspectives. For example, it contains $144$ generalized Eckardt points, that is, points lying on ten $(-1)$-curves, and these points do not lie on the ramification curve of the anti-bicanonical map, answering \cite[Question 7.3]{Winter}:

To see this, note first that we can write the surface as
$$
y^2 = x^3 + s^6 + t^6
$$
by Lemma \ref{lem: LA5}. By \cite[Proposition 5.1]{Alvarado}, all $(-1)$-curves on $X$ are defined over $\mathbb{F}_{25}$. The associated rational elliptic surface has six cuspidal fibers $F_1,\hdots,F_6$ defined over $\mathbb{F}_{25}$. Each of the $240$ $(-1)$-curves on $X$ must pass through one of the $24$ smooth $\mathbb{F}_{25}$-points on $F_i$ that are different from the base point of $|-K_X|$ and the cusp. By \cite{DolgachevMartin}, the automorphism group of $X$ acts transitively on these $24$ points, hence there are precisely ten $(-1)$-curves through each of them. We conclude that there are $144$ generalized Eckardt points on $X$. By construction, none of these Eckardt points lie on the ramification curve of $X \to \mathbb{P}(1,1,2)$, since this curve meets the fibers $F_i$ precisely in the cusp.

This argument also shows that $X$ can be realized as the blow-up of $\mathbb{P}^2$ in eight $\mathbb{F}_{25}$-points in general position. We note that, over $\mathbb{F}_5$, there exist no eight points in general position, so this is a rather restrictive condition.
\end{remark}

To prove Theorem \ref{thm: main theorem}, we use an elementary computational approach: Applying Fedder's criterion (see Section \ref{sec: Fedder}) to the anti-canonical model of $X$, we characterize the non-$F$-splitting of $X$ in terms of conditions on the parameters appearing in the equation of the anti-canonical model. Then, we translate these conditions to the conditions of type (b) and (c) that appear in Theorem \ref{thm: main theorem}. By using this approach, we will also obtain a classification of non-$F$-split Gorenstein log del Pezzo surfaces of degree $1$ in characteristics $5$, $3$, and $2$ that the interested reader can extract from Lemmas \ref{lem: char5}, \ref{lem: char3}, and \ref{lem: char2}.

\begin{acknowledgments*}
We gratefully acknowledge the hospitality and support of the Max-Planck-Institut f\"ur Mathematik Bonn, where we started working on this article during an internship program. We thank Fabio Bernasconi for helpful comments on a first version of this article.
\end{acknowledgments*}

\section{Fedder's criterion for del Pezzo surfaces of degree $1$} \label{sec: Fedder}

To determine whether or not a del Pezzo surface $X$ of degree $1$ is $F$-split, we want to apply the following criterion (see \cite[Theorem 1.3]{Saito} and \cite[Proposition 3.1]{Smith}):

\begin{theorem}
Let $X$ be a projective variety over a perfect field $k$. Then, the following are equivalent:
\begin{enumerate}
\item $X$ is $F$-split.
\item The section ring $S_{\mathcal{L}} = \bigoplus_{n \in \mathbb{N}} H^0(X,\mathcal{L})$ is $F$-split for all invertible sheaves $\mathcal{L}$ on $X$.
\item The section ring $S_{\mathcal{L}} = \bigoplus_{n \in \mathbb{N}} H^0(X,\mathcal{L})$ is $F$-split for some ample invertible sheaf $\mathcal{L}$ on $X$.
\end{enumerate}
\end{theorem}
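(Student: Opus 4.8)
The plan is to prove the cycle of implications $(1) \Rightarrow (2) \Rightarrow (3) \Rightarrow (1)$. Of these, $(2) \Rightarrow (3)$ is immediate: since $X$ is projective it carries an ample invertible sheaf, and one simply specializes the hypothesis of $(2)$ to such a sheaf. The two substantial implications are $(1) \Rightarrow (2)$, which I would build from the projection formula, and $(3) \Rightarrow (1)$, which I would build from the correspondence between graded modules over the section ring and quasi-coherent sheaves on $X \cong \mathrm{Proj}(S_{\mathcal{L}})$. Throughout I write $F \colon X \to X$ for the absolute Frobenius and recall that $X$ being $F$-split means that the map $\mathcal{O}_X \to F_* \mathcal{O}_X$ admits an $\mathcal{O}_X$-linear retraction $\phi$ with $\phi(1) = 1$; the $F$-splitting of a graded ring $S$ is the analogous statement for $a \mapsto a^p$.

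For $(1) \Rightarrow (2)$, I would fix any invertible sheaf $\mathcal{L}$ and a splitting $\phi \colon F_* \mathcal{O}_X \to \mathcal{O}_X$ of the Frobenius of $X$. Since $F^* \mathcal{L}^n \cong \mathcal{L}^{pn}$, the projection formula gives $F_* \mathcal{L}^{pn} \cong \mathcal{L}^n \otimes F_* \mathcal{O}_X$, and tensoring $\phi$ by $\mathcal{L}^n$ yields maps $F_* \mathcal{L}^{pn} \to \mathcal{L}^n$ retracting the Frobenius twist on each power of $\mathcal{L}$. Passing to global sections produces $k$-linear maps $\Phi_n \colon S_{pn} \to S_n$ on the graded pieces $S_n = H^0(X, \mathcal{L}^n)$ of the section ring $S_{\mathcal{L}}$. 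Because $\phi$ is $\mathcal{O}_X$-linear and the projection-formula isomorphisms are compatible with the multiplication of $S_{\mathcal{L}}$, the assembled map $\Phi = \bigoplus_n \Phi_n$ is $S_{\mathcal{L}}$-linear in the Frobenius-twisted sense, and $\Phi(1) = 1$ because $\phi(1) = 1$. Hence $S_{\mathcal{L}}$ is $F$-split.

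For $(3) \Rightarrow (1)$, I would take $\mathcal{L}$ ample with $S = S_{\mathcal{L}}$ $F$-split, so that $X \cong \mathrm{Proj}(S)$ and $\widetilde{S} = \mathcal{O}_X$. A splitting is an element of $\mathrm{Hom}_S(F_* S, S)$ sending $1$ to $1$; since the Frobenius multiplies degrees by $p$, such a map is a compatible family of degree-lowering maps $S_{pn} \to S_n$. The key step is to apply the sheafification functor $\widetilde{(-)}$ from graded $S$-modules to quasi-coherent sheaves on $X$: as $\mathcal{L}$ is ample this functor is exact and, because $F$ is finite, it identifies $F_* S$ with $F_* \mathcal{O}_X$ (with its Frobenius-twisted structure) compatibly with sending $S$ to $\mathcal{O}_X$. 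Applying $\widetilde{(-)}$ to the splitting $\Phi \colon F_* S \to S$ then yields an $\mathcal{O}_X$-linear map $\widetilde{\Phi} \colon F_* \mathcal{O}_X \to \mathcal{O}_X$, and functoriality together with $\Phi \circ F = \mathrm{id}$ forces $\widetilde{\Phi} \circ F = \mathrm{id}$, so $X$ is $F$-split.

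I expect the main obstacle to be the bookkeeping in $(3) \Rightarrow (1)$: one must match the grading conventions under the Frobenius and verify that $\widetilde{(-)}$ identifies $F_* S$ with $F_* \mathcal{O}_X$ itself, rather than with some twist of it, compatibly with the module and Frobenius structures. This is precisely where ampleness of $\mathcal{L}$ enters — it guarantees $\mathrm{Proj}(S) = X$ and makes the graded-module/sheaf dictionary faithful in the relevant degrees — and it explains why $(3)$ is stated for an ample sheaf whereas $(1) \Rightarrow (2)$ holds for arbitrary $\mathcal{L}$. Once this identification is in place, the surjectivity of evaluation-at-$1$ transfers between ring and scheme with no further input.
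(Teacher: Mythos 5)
The paper does not prove this statement: it imports it verbatim from the cited sources (Saito, Theorem 1.3, and Smith, Proposition 3.1), so there is no in-paper argument to compare against. Your proposal reconstructs, in outline, exactly the standard proof from those references: $(2)\Rightarrow(3)$ is trivial, $(1)\Rightarrow(2)$ follows by twisting a splitting $\phi\colon F_*\mathcal{O}_X\to\mathcal{O}_X$ through the projection formula $F_*\mathcal{L}^{pn}\cong\mathcal{L}^n\otimes F_*\mathcal{O}_X$ and taking global sections, and $(3)\Rightarrow(1)$ follows by sheafifying a splitting of the section ring over $X\cong\mathrm{Proj}(S_{\mathcal{L}})$. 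The architecture and both substantial implications are correct.

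Two points need more care than you give them, both in $(3)\Rightarrow(1)$. First, you assert that an element of $\mathrm{Hom}_S(F_*S,S)$ sending $1$ to $1$ ``is a compatible family of degree-lowering maps $S_{pn}\to S_n$''; an arbitrary $S$-linear splitting need not be graded. The standard repair is to decompose $F_*S=\bigoplus_{i=0}^{p-1}F_*S^{(i)}$ with $S^{(i)}=\bigoplus_n S_{pn+i}$, observe that $F_*S^{(0)}$ is a genuinely graded $S$-module with $(F_*S^{(0)})_n=S_{pn}$ and $\widetilde{F_*S^{(0)}}\cong F_*\mathcal{O}_X$, and replace $\Phi$ by the degree-preserving component of its restriction to $F_*S^{(0)}$; this is still a splitting because $1$ sits in degree $0$. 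Only after this reduction does the functor $\widetilde{(-)}$ produce the map $F_*\mathcal{O}_X\to\mathcal{O}_X$ you want. Second, if $\mathcal{L}$ is ample but not very ample, $S$ is not generated in degree one, so the graded-module/sheaf dictionary requires either a little extra care with $\mathrm{Proj}$ of a non-standard graded ring or a passage to a Veronese subring $S^{(d)}$ (which inherits $F$-splitting as a direct summand of $S$ over itself) before sheafifying. Neither issue is fatal, and you correctly flag the degree bookkeeping as the delicate point; with these repairs your argument is a complete proof and agrees with the one in the literature the paper cites.
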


Here, a $k$-algebra $R$ is called \emph{$F$-split} if the map $F:R \to F_*R$ splits as a map of $R$-modules. In the del Pezzo case, we have a natural choice for an ample invertible sheaf, namely $\mathcal{L} = \omega_X^{-1}$. 
Recall that the anti-canonical model of a del Pezzo surface $X$ of degree $1$ is a sextic hypersurface in $\mathbb{P}(1,1,2,3)$ given by an equation $f$ of the form
\begin{equation} \label{eq: dP1 general}
f = y^2 + a_1(s,t)xy + a_3(s,t)y - (x^3 + a_2(s,t)x^2 + a_4(s,t)x + a_6(s,t)),
\end{equation}
where $a_i$ is a homogeneous polynomial of degree $i$ in the weight $1$ variables $s$ and $t$. In other words, we have
$$
S_{\omega_X^{-1}} \cong k[s,t,x,y]/(f).
$$
To determine the $F$-splitting of this ring, we use Fedder's criterion \cite[Theorem 1.12]{Fedder}:

\begin{lemma} \label{lem: Fedder}
Let $k$ be a perfect field and $f \in k[x_1,\hdots,x_n]$. Then, the following are equivalent:
\begin{enumerate}
\item The ring $k[x_1,\hdots,x_n]/(f)$ is $F$-split.
\item We have $f^{p-1} \not \in (x_0^p,\hdots,x_n^p)$.
\end{enumerate}
\end{lemma}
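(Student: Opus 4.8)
The plan is to prove the local form of Fedder's criterion and deduce the stated hypersurface version. Since $F$-splitting of the (weighted-)homogeneous ring $k[x_1,\dots,x_n]/(f)$ is detected at the irrelevant maximal ideal, I may work with $S = k[x_1,\dots,x_n]$ localized at $\mathfrak m = (x_1,\dots,x_n)$, and set $I = (f)$, $R = S/I$. The starting point is a \emph{duality} for Frobenius on the regular ring $S$. Because $k$ is perfect and $S$ is a polynomial ring, $F \colon S \to F_*S$ is finite and $F_*S$ is a free $S$-module with basis the monomials $x^a = x_1^{a_1}\cdots x_n^{a_n}$ with $0 \le a_i \le p-1$; concretely every element of $F_*S$ is written uniquely as $\sum_a u_a^p\, x^a$ with $u_a \in S$. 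I introduce the canonical trace map $\Phi \colon F_*S \to S$, $\sum_a u_a^p x^a \mapsto u_{(p-1,\dots,p-1)}$, which is $S$-linear precisely because of the identity $\Phi(a^p z) = a\,\Phi(z)$. The first key step is the structural fact that $\operatorname{Hom}_S(F_*S, S)$ is a free rank-one $F_*S$-module generated by $\Phi$ via $u \mapsto (z \mapsto \Phi(uz))$; I would prove this by exhibiting the dual basis $x^{(p-1,\dots,p-1)-a}\,\Phi$ to the monomial basis $\{x^a\}$, using the explicit action of $\Phi$ on monomials.

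Next I translate $F$-splitting into this language. The ring $R$ is $F$-split if and only if some $R$-linear map $\psi \colon F_*R \to R$ sends $1$ to a unit, since then $\psi(1)^{-1}\psi$ satisfies $\psi'(r^p) = \psi'(r \cdot 1) = r$ for all $r$. As $F_*R = F_*S/F_*I$ and $F_*S$ is projective over $S$, an $R$-linear map $F_*R \to R$ is the same as an $S$-linear map $\varphi \colon F_*S \to S$ with $\varphi(F_*I) \subseteq I$, taken modulo those $\varphi$ with $\varphi(F_*S) \subseteq I$. Writing $\varphi = u\,\Phi$ via the duality and computing $\varphi(x^b) = \Phi(u\,x^b) = u_{(p-1,\dots,p-1)-b}$ from the monomial formula, I would show that $\varphi(F_*S) \subseteq I$ holds exactly when every coefficient $u_a$ lies in $I$, i.e.\ when $u \in I^{[p]}$, and that $\varphi(F_*I) \subseteq I$ holds exactly when $uf \in I^{[p]}$, i.e.\ when $u \in (I^{[p]}:I)$. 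This yields the identification $\operatorname{Hom}_R(F_*R, R) \cong (I^{[p]}:I)/I^{[p]}$.

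With this in hand the criterion falls out. A class $u \in (I^{[p]}:I)$ gives a map sending $1 \mapsto \overline{\Phi(u)}$, so $R$ is $F$-split if and only if there is $u \in (I^{[p]}:I)$ with $\Phi(u) \notin \mathfrak m$. On the one hand $\Phi(F_*\mathfrak m^{[p]}) \subseteq \mathfrak m$ because $\Phi(x_i^p z) = x_i \Phi(z)$, so if $(I^{[p]}:I) \subseteq \mathfrak m^{[p]}$ then no such $u$ exists. On the other hand, if $u \in (I^{[p]}:I) \setminus \mathfrak m^{[p]}$ then some coefficient $u_a \notin \mathfrak m$; multiplying by the monomial $x^{(p-1,\dots,p-1)-a}$, which keeps $u$ inside the colon ideal, moves that coefficient into the top slot and produces an element with $\Phi \notin \mathfrak m$. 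Hence $R$ is $F$-split if and only if $(I^{[p]}:I) \not\subseteq \mathfrak m^{[p]}$. Finally, for the hypersurface $I = (f)$ the domain $S$ gives $(I^{[p]}:I) = ((f^p):(f)) = (f^{p-1})$, and $(f^{p-1}) \subseteq \mathfrak m^{[p]}$ is equivalent to $f^{p-1} \in (x_1^p,\dots,x_n^p)$, which is the asserted condition.

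I expect the main obstacle to be the first step together with the two colon-ideal translations, i.e.\ pinning down the duality $\operatorname{Hom}_S(F_*S, S) \cong F_*S\cdot\Phi$ and the precise combinatorial computation of $\Phi$ on the monomial basis that converts membership of $u$ in $I^{[p]}$, respectively $(I^{[p]}:I)$, into the vanishing conditions on $\varphi$. Everything else is formal once the freeness of $F_*S$ over the perfect base field and the explicit trace map are available.
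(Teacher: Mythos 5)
Your proposal is correct, but there is nothing in the paper to compare it against: the paper states this lemma as a quoted result, citing Fedder's original theorem, and gives no proof of its own. What you have written is essentially the standard proof from the cited source: establish that $F_*S$ is free over $S$ with monomial basis (using perfectness of $k$), that $\operatorname{Hom}_S(F_*S,S)$ is free of rank one over $F_*S$ generated by the trace $\Phi$, deduce the identification $\operatorname{Hom}_R(F_*R,R)\cong (I^{[p]}:I)/I^{[p]}$, reduce $F$-splitting to $(I^{[p]}:I)\not\subseteq\mathfrak m^{[p]}$, and specialize to $(f^p:f)=(f^{p-1})$ for a hypersurface. All the individual steps check out; in particular your key computation that $\Phi\bigl(x^{(p-1,\dots,p-1)-a}u\bigr)=u_a$ is clean, since $a'_i-a_i\equiv 0\pmod p$ with $0\le a_i,a'_i\le p-1$ forces $a'=a$, so no other basis component contributes to the top slot. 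Two minor points of care: the "dual basis" claim and the equivalence $\varphi(F_*S)\subseteq I\Leftrightarrow u\in I^{[p]}$ require the same triangularity argument (products $x^ax^b$ can overflow into $p$-th powers), which you correctly flag as the main technical content; and your preliminary reduction to the irrelevant maximal ideal is exactly what makes the global statement legitimate in the graded setting in which the paper applies it. Note also that the statement in the paper has a small typo ($x_0^p$ should be $x_1^p$), which your proof implicitly corrects.
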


\begin{example}
Let $k$ be an algebraically closed field of characteristic $p > 0$. If $X$ is an Abelian variety over $k$, then $X$ is $F$-split if and only if it is ordinary (see e.g. \cite[Lemma 1.1]{MehtaSrinivas}).

In particular, if $X$ is an elliptic curve over $k$, then $X$ is not $F$-split if and only if it is supersingular. If $p \in \{2,3,5\}$, it is well-known (see e.g. \cite[Theorem 4.1, Exercise 5.7]{Silverman}), that $X$ is supersingular if and only if its $j$-invariant is $0$. This can also be checked by applying Fedder's criterion to a Weierstraß equation of $X$.
\end{example}

To simplify the equations we get, we will use the following observation from linear algebra:

\begin{lemma} \label{lem: LA5}
Let $k$ be an algebraically closed field of characteristic $p > 0$. Let $a,b,c,d \in k$ be such that the homogeneous polynomial $g = as^{p+1} + bs^pt + cst^p + dt^{p+1}$ is non-zero and has at least $3$ distinct roots on $\mathbb{P}^1$. Then, there is a linear change of coordinates $\sigma \in {\rm GL}_2(k)$ such that $\sigma(g) = s^pt - st^p$.
\end{lemma}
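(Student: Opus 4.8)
The plan is to exploit the very special shape of $g$: among degree-$(p+1)$ forms, only the four monomials $s^{p+1}, s^pt, st^p, t^{p+1}$ occur, and this property is preserved under linear changes of coordinates. Writing $g = (s\ t)\,M\,(s^p\ t^p)^{\mathrm{T}}$ with $M = \left(\begin{smallmatrix} a & c \\ b & d\end{smallmatrix}\right)$, a linear substitution with matrix $A$ replaces $M$ by $A^{\mathrm{T}} M A^{[p]}$, where $A^{[p]}$ denotes the entrywise $p$-th power (using $(A\mathbf{v})^{[p]} = A^{[p]}\mathbf{v}^{[p]}$); in particular the result is again of the form $a's^{p+1}+b's^pt+c'st^p+d't^{p+1}$. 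Equivalently, one checks directly that $(\alpha s+\beta t)^{p+1}=\alpha^{p+1}s^{p+1}+\alpha^p\beta s^pt+\alpha\beta^p st^p+\beta^{p+1}t^{p+1}$ lies in the span of these four monomials. Since a linear change of coordinates induces a bijection of $\mathbb{P}^1$, it preserves the number of distinct roots of $g$.

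First I would use that $g$ has at least two distinct roots to normalize. Because $\mathrm{PGL}_2(k)$ acts $2$-transitively on $\mathbb{P}^1$, there is a $\sigma_1 \in \mathrm{GL}_2(k)$ moving two of the roots to $[0:1]$ and $[1:0]$; by the previous paragraph $\sigma_1(g)$ is again of the special form and still has at least three distinct roots. Now $[1:0]$ and $[0:1]$ being roots forces the coefficients of $s^{p+1}$ and $t^{p+1}$ to vanish, so after replacing $g$ by $\sigma_1(g)$ we may assume $g = bs^pt + cst^p = st\,(bs^{p-1}+ct^{p-1})$.

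The crux is then a count of the distinct roots of this factored form. If $b=0$ or $c=0$, then $g$ equals $cst^p$ or $bs^pt$, which has only the two distinct roots $[1:0]$ and $[0:1]$, contradicting the hypothesis of at least three distinct roots. Hence $bc\neq 0$. In that case the factor $bs^{p-1}+ct^{p-1}$ is separable: dehomogenizing via $u = s/t$, its derivative is $-bu^{p-2}$, which is nonzero at every root, since all roots satisfy $u^{p-1}=-c/b\neq 0$ and hence $u\neq 0$. Thus it contributes exactly $p-1$ distinct roots, all different from $[0:1]$ and $[1:0]$, and $g$ has the full complement of $p+1$ distinct simple roots. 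This step is the only real content: the presence of even a single third root already forces the entire $\mathbb{F}_p$-rational structure, because the only alternative permitted by the special shape of $g$ is a root of multiplicity $p$.

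It remains to rescale $b$ and $c$ to $1$ and $-1$. A diagonal substitution $s\mapsto\lambda s,\ t\mapsto\mu t$ sends $bs^pt+cst^p$ to $b\lambda^p\mu\,s^pt + c\lambda\mu^p\,st^p$, so I would solve $b\lambda^p\mu=1$ and $c\lambda\mu^p=-1$; eliminating $\mu$ yields the single equation $\lambda^{1-p^2}=-b^p/c$ with nonzero right-hand side, which has a solution since $k$ is algebraically closed, and then $\mu = b^{-1}\lambda^{-p}$ is nonzero. Composing $\sigma_1$ with this diagonal map produces the desired $\sigma\in\mathrm{GL}_2(k)$ with $\sigma(g)=s^pt-st^p$. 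Everything outside the root count is formal, so I expect no further obstacle.
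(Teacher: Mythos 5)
Your proposal is correct and rests on the same key observation as the paper's proof, namely that the special shape of $g$ is preserved under linear changes of coordinates, combined with the transitivity of ${\rm PGL}_2(k)$ on roots. The only (harmless) difference is that the paper normalizes three roots at once to $(1{:}0)$, $(0{:}1)$, $(1{:}1)$, which immediately forces $a=d=0$ and $b=-c$, whereas you normalize two roots, deduce $bc\neq 0$ from the existence of a third, and finish with a diagonal rescaling.
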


\begin{proof}
The key observation is that a linear change of coordinates sends $g$ to an equation of the same form, possibly with different $a,b,c,d \in k$. After such a change of coordinates, we may assume that $(1,0),(0,1)$, and $(1,1)$ are roots of $g$. This is equivalent to saying that $a = d = 0$ and $b = -c$. Thus, after rescaling, $g$ has the desired form.
\end{proof}

\section{Characteristic $5$}
In this section, we prove Theorem \ref{thm: main theorem} $(1)$.
We have $p = 5$, so we can complete the squares and cubes in Equation \eqref{eq: dP1 general} to assume that the del Pezzo surface $X$ is given by an equation of the form
\begin{equation}\label{eq: dp1 char 5}
f = y^2 - (x^3 + a_4(s,t)x + a_6(s,t)).
\end{equation}

\begin{lemma} \label{lem: char5}
Let $X \subseteq \mathbb{P}(1,1,2,3)$ be a surface over an algebraically closed field $k$ of characteristic $p = 5$ given by Equation \eqref{eq: dp1 char 5}. Then, $X$ is not $F$-split if and only if $a_4 = 0$ and $a_6 = as^6 + bs^5t + cst^5 + dt^6$ for some $a,b,c,d \in k$.
\end{lemma}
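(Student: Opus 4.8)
The plan is to apply Fedder's criterion (Lemma \ref{lem: Fedder}) directly to $f$ in the polynomial ring $k[s,t,x,y]$. By that criterion, $X$ fails to be $F$-split precisely when $f^{4} \in (s^5, t^5, x^5, y^5)$, which happens if and only if every monomial $s^i t^j x^k y^\ell$ occurring in $f^4$ with nonzero coefficient has $\max(i,j,k,\ell) \geq 5$. Equivalently, $X$ is \emph{not} $F$-split if and only if the coefficient in $f^4$ of every monomial $s^i t^j x^k y^\ell$ with $i,j,k,\ell \leq 4$ vanishes. So the whole problem reduces to computing these ``low'' coefficients and determining exactly when they all vanish.

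First I would exploit the weighted grading: writing $g = x^3 + a_4 x + a_6$, the polynomial $f = y^2 - g$ is weighted-homogeneous of degree $6$ for the weights $(1,1,2,3)$ on $(s,t,x,y)$, so $f^4$ is weighted-homogeneous of degree $24$, and any admissible monomial must have weighted degree $i + j + 2k + 3\ell = 24$. Since $y$ enters $f$ only through $y^2$, expand $f^4 = \sum_{m=0}^{4} \binom{4}{m} y^{2m} (-g)^{4-m}$; the constraint $\ell = 2m \leq 4$ forces $m \in \{0,1,2\}$. For $m=0$ and $m=1$ the surviving factor $g^{4-m}$ is weighted-homogeneous of degree $24$ and $18$ respectively, while a purely $(s,t,x)$-monomial with $i,j,k \leq 4$ has weighted degree at most $4+4+8 = 16$, so no admissible monomial can occur there. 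Hence only the term $m=2$, namely $\binom{4}{2}\, y^4 g^2 = 6\, y^4 g^2$, can contribute, and I note that $6 \not\equiv 0 \pmod 5$.

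It then remains to read off the monomials $s^i t^j x^k$ with $i,j,k \leq 4$ appearing in $g^2 = x^6 + 2a_4 x^4 + 2 a_6 x^3 + a_4^2 x^2 + 2 a_4 a_6 x + a_6^2$. The bound $k \leq 4$ together with $i+j+2k = 12$ and $i,j \leq 4$ forces $k \in \{2,3,4\}$, leaving exactly three sources: the $x^4$-coefficient $2a_4$ contributes all monomials of $a_4$ (whose bidegrees automatically satisfy $i,j\leq 4$); the $x^3$-coefficient $2a_6$ contributes only the monomials of $a_6$ with $i,j \leq 4$, i.e.\ the coefficients of $s^4t^2, s^3t^3, s^2t^4$; and the $x^2$-coefficient $a_4^2$ contributes only its $s^4t^4$-coefficient. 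Writing $a_4 = \sum_p \alpha_p s^p t^{4-p}$ and $a_6 = \sum_q \beta_q s^q t^{6-q}$, and using $2 \not\equiv 0 \pmod 5$, the vanishing of all these coefficients becomes $\alpha_0 = \cdots = \alpha_4 = 0$, then $\beta_2 = \beta_3 = \beta_4 = 0$, and $2\alpha_0\alpha_4 + 2\alpha_1\alpha_3 + \alpha_2^2 = 0$. The first set is exactly $a_4 = 0$, which makes the quadratic relation automatic, and the second set says precisely that $a_6 = a s^6 + b s^5 t + c s t^5 + d t^6$. This is the claimed criterion.

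The computation is elementary, so the only genuine obstacle is organizational: I must be certain that no cancellation occurs between contributions of different monomials or different values of $m$. The weighted-degree argument eliminating $m=0,1$ is exactly what guarantees this, since it shows that each admissible monomial of $f^4$ receives its entire coefficient from the single term $6\, y^4 g^2$; after that, the three contributing families live in disjoint $x$-degrees and hence cannot interfere. The remaining care is simply to check that the integer coefficients $\binom{4}{2} = 6$ and $2$ are units modulo $5$, which they are.
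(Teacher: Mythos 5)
Your proposal is correct and follows essentially the same route as the paper: apply Fedder's criterion to $f^4$ modulo $(s^5,t^5,x^5,y^5)$ and isolate the three potentially surviving contributions $a_4x^4y^4$, $a_6x^3y^4$, $a_4^2x^2y^4$, which force $a_4=0$ and $a_6=as^6+bs^5t+cst^5+dt^6$. Your organization via the weighted grading and the expansion in powers of $y^2$ is a slightly more systematic way to rule out the other terms, and you are more explicit than the paper about the absence of cancellation and about the multinomial coefficients being units modulo $5$, but the substance is identical.
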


\begin{proof}
Using Lemma \ref{lem: Fedder}, we know that $X$ is not $F$-split if and only if $f^4 \in (x^5,y^5,s^5,t^5) =: I$.
We observe that the only products of four of the summands that appear in $f$ that are not automatically in $I$ are the following:
$$
a_4x^4y^4, a_4^2x^2y^4, a_6x^3y^4
$$
Now, $a_4$ is homogeneous of degree $4$, so $a_4 x^4y^4 \in I$ if and only if $a_4=0$.

Similarly, $a_6$ is homogeneous of degree $6$, so $a_6y^4x^3 \in I$ if and only if $a_6$ is of the stated form.
\end{proof}

A surface $X$ given by Equation \eqref{eq: dp1 char 5} is the Weierstra{\ss} equation of a rational elliptic surface over $\mathbb{P}^1$. This elliptic fibration has the following $j$-invariant and discriminant
\begin{eqnarray*}
\Delta &=&  a_4^3 - 2 a_6^2\\
j &=& \frac{3a_4^3}{\Delta}.
\end{eqnarray*}
So, asking that $a_4 = 0$ is the same as asking that $j = 0$, which, in characteristic $5$, is the same as asking that all smooth members of $|-K_X|$ are supersingular. Once $a_4 = 0$, it follows easily from the Jacobian criterion that $X$ is smooth if and only if $\Delta$ has only double roots if and only if $a_6$ has $6$ distinct roots. 
Combining Lemma \ref{lem: char5} and Lemma \ref{lem: LA5}, we obtain Theorem \ref{thm: main theorem} (1), since the roots of $s^5t-st^5$ are precisely the $\mathbb{F}_5$-rational points of $\mathbb{P}^1$. 



\section{Characteristic $3$}

In this section, we prove Theorem \ref{thm: main theorem} $(2)$.
We have $p = 3$, so we can complete the squares but not the cubes in Equation \eqref{eq: dP1 general} to assume that the del Pezzo surface $X$ is given by an equation of the form
\begin{equation}\label{eq: dp1 char 3}
f = y^2 - (x^3 + a_2(s,t)x^2 + a_4(s,t)x + a_6(s,t)).
\end{equation}

\begin{lemma} \label{lem: char3}
Let $X \subseteq \mathbb{P}(1,1,2,3)$ be a surface over an algebraically closed field $k$ of characteristic $p = 3$ given by Equation \eqref{eq: dp1 char 3}. Then, $X$ is not $F$-split if and only if $a_2 = 0$ and $a_4 = as^4 + bs^3t + cst^3 + dt^4$ for some $a,b,c,d \in k$.
\end{lemma}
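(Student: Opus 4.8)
The plan is to apply Fedder's criterion (Lemma~\ref{lem: Fedder}) directly, exactly as in the proof of Lemma~\ref{lem: char5}, but now tracking the extra terms that appear because we are in characteristic $3$ and the equation carries an $a_2 x^2$ term. By Lemma~\ref{lem: Fedder}, the surface $X$ is not $F$-split if and only if $f^{2} \in (x^3, y^3, s^3, t^3) =: I$, where
\begin{equation*}
f = y^2 - x^3 - a_2 x^2 - a_4 x - a_6.
\end{equation*}
First I would expand $f^2$ and discard every monomial that automatically lies in $I$. Since we raise to the power $p-1 = 2$, we are looking at products of two of the six summands $y^2, -x^3, -a_2x^2, -a_4x, -a_6$ of $f$. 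The terms $y^2 \cdot y^2 = y^4$, anything involving $x^3$, and the cross terms whose $x$- or $y$-exponent reaches the cube are immediately in $I$. The surviving candidates are those with $x$-degree at most $2$ and $y$-degree at most $2$: namely the product $(-a_2 x^2)(-a_4 x)$ has $x$-degree $3$ and dies, so the genuinely relevant survivors are
\begin{equation*}
y^2 \cdot (-a_2 x^2) = -a_2 x^2 y^2, \qquad (-a_2 x^2)^2 = a_2^2 x^4, \qquad (-a_4 x)^2 = a_4^2 x^2, \qquad y^2 \cdot (-a_4 x) = -a_4 x y^2.
\end{equation*}
The second of these, $a_2^2 x^4$, lies in $I$ because of $x^4 = x \cdot x^3$, so it drops out as well. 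Collecting the remaining monomials grouped by their $(x,y)$-bidegree, I would read off that $f^2 \in I$ is equivalent to the simultaneous membership of the coefficient polynomials $a_2$, $a_4$ (attached to $x^2y^2$, $xy^2$) and whatever combination sits at $x^2$, $x^2 y^2$, etc., in the monomial ideal $(s^3, t^3)$ of $k[s,t]$.

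The key step is then the same elementary observation used in Lemma~\ref{lem: char5}: a homogeneous polynomial of degree $n$ in $s,t$ lies in $(s^3,t^3)$ if and only if every one of its monomials is divisible by $s^3$ or $t^3$, i.e. it has no monomial of the form $s^i t^j$ with $i, j \le 2$. Applying this to the surviving coefficients, $-a_2 x^2 y^2 \in I$ forces $a_2 \in (s^3,t^3)$; but $a_2$ is homogeneous of degree $2$, so this simply means $a_2 = 0$. For the degree-$4$ polynomial $a_4$ sitting in front of $x y^2$, membership in $(s^3,t^3)$ removes precisely the monomials $s^2 t^2$, $s^3 t$, $s t^3$ — wait, more carefully: the monomials of degree $4$ not divisible by $s^3$ or $t^3$ are exactly $s^2 t^2$ and the two $s^3 t$-type and $st^3$-type are divisible, so the condition $a_4 \in (s^3,t^3)$ keeps all monomials \emph{except} the $s^2 t^2$ coefficient free. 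Thus the requirement is that the $s^2 t^2$-coefficient of $a_4$ vanish, which is exactly the statement that $a_4 = a s^4 + b s^3 t + c s t^3 + d t^4$ for some $a,b,c,d \in k$.

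I expect the main subtlety — and the only place where care beyond the degree-$1$ case is needed — to be the bookkeeping of which cross terms in $f^2$ genuinely survive modulo $I$, since the presence of $a_2 x^2$ produces several $x^2$-bidegree contributions (from $a_2^2 x^4$, from $a_4 x \cdot a_2 x^2$, and so on) that one must confirm either die automatically or combine cleanly. Once the list of survivors is pinned down and seen to impose independent conditions on $a_2$ and $a_4$ (they live in different $x,y$-bidegrees, so no cancellation between them can occur), the argument closes exactly as in Lemma~\ref{lem: char5} by invoking the monomial-ideal description of $(s^3,t^3)$. The role of $a_6$ here is worth double-checking: unlike the characteristic-$5$ case, where $a_6 x^3 y^4$ survived, in characteristic $3$ any product involving $a_6$ also involves another factor that, together with $a_6$, overshoots in the $x$- or $y$-degree or yields a pure $s,t$-term of too-high degree, so $a_6$ imposes no condition and remains a free parameter — consistent with the three normal forms for $a_6$ being left unconstrained in the statement of Theorem~\ref{thm: main theorem}~(2).
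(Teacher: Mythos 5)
Your proposal is correct and follows essentially the same route as the paper: apply Fedder's criterion to $f^2$ modulo the monomial ideal $(x^3,y^3,s^3,t^3)$, observe that the only surviving products are $a_2x^2y^2$ and $a_4xy^2$ (all terms involving $a_6$, $a_2^2$, $a_4^2$, or $a_2a_4$ die either from $x$-/$y$-degree or because their $s,t$-degree is at least $5$), and read off the conditions $a_2=0$ and $a_4$ having no $s^2t^2$ monomial. Your bookkeeping is slightly more verbose than the paper's (which simply asserts the list of survivors), but the argument is the same and complete.
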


\begin{proof}
Using Lemma \ref{lem: Fedder}, we know that $X$ is not $F$-split if and only if $f^2 \in (x^3,y^3,s^3,t^3) =: I$.
We observe that the only products of two of the summands that appear in $f^2$ that are not automatically in $I$ are the following:
$$
a_2 x^2y^2, a_4 xy^2
$$
Since $a_2$ is homogeneous of degree $2$ in $s$ and $t$, we have $a_2x^2y^2 \in I$ if and only if $a_2 =0$.

Similarly, $a_4$ is homogeneous of degree $4$ in $s$ and $t$, so $a_4 xy^2\in I$ if and only if $a_4$ is of the stated form.
\end{proof}

The genus $1$ fibration associated to a surface $X$ with Equation \eqref{eq: dp1 char 3} has the following $j$-invariant and discriminant
\begin{eqnarray*}
\Delta &=& -a_2^2(a_2a_6 - a_4^2) - a_4^3\\
j &=& \frac{a_2^6}{\Delta}.
\end{eqnarray*}
If $X$ is smooth, then the fibration is elliptic, hence $\Delta \neq 0$, so asking that $a_2 = 0$ is the same asking that all smooth members of $|-K_X|$ are
supersingular. Combining Lemma \ref{lem: char3} and Lemma \ref{lem: LA5}, we obtain Theorem \ref{thm: main theorem} (2), with the three cases corresponding to $\Delta$ having $1$, $2$, or at least $3$ roots.



\section{Characteristic $2$}
In this section, we prove Theorem \ref{thm: main theorem} $(3)$.

\begin{lemma} \label{lem: char2}
Let $X \subseteq \mathbb{P}(1,1,2,3)$ be a surface over an algebraically closed field $k$ of characteristic $p = 2$ given by Equation \eqref{eq: dP1 general}. Then, $X$ is not $F$-split if and only if $a_1 = 0$.
\end{lemma}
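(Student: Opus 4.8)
The plan is to apply Fedder's criterion (Lemma \ref{lem: Fedder}) directly, exactly as in the proofs of Lemmas \ref{lem: char5} and \ref{lem: char3}. Since $p = 2$, we have $p - 1 = 1$, so the criterion says that $X$ is \emph{not} $F$-split if and only if $f \in (s^2, t^2, x^2, y^2) =: I$. This is more transparent than the higher-characteristic cases because we do not need to expand any power of $f$; it suffices to reduce $f$ itself modulo $I$.

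First I would go through $f$ term by term. The term $y^2$ and every term containing $x^2$ or $x^3$ lie in $I$ immediately. For the remaining terms the key observation is that a homogeneous polynomial of degree $d$ in the two variables $s,t$ automatically lies in $(s^2,t^2)$ as soon as $d \geq 2$: any monomial $s^i t^{d-i}$ with $i + (d-i) = d \geq 2$ satisfies $i \geq 2$ or $d - i \geq 2$. Hence $a_2, a_3, a_4, a_6 \in (s^2, t^2) \subseteq I$, so the terms $a_2 x^2$, $a_3 y$, $a_4 x$, and $a_6$ all lie in $I$ as well.

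The only surviving term is $a_1 x y$. Writing $a_1 = \alpha s + \beta t$, I would note that $a_1 x y = \alpha\, sxy + \beta\, txy$, and neither monomial $sxy$ nor $txy$ lies in $I$, since $I$ is a monomial ideal generated by pure squares while these monomials are square-free in each variable. Therefore $a_1 xy \in I$ if and only if $\alpha = \beta = 0$, that is, $a_1 = 0$. Combining the reductions, $f \equiv a_1 xy \pmod{I}$, so $f \in I$ if and only if $a_1 = 0$, which by Fedder's criterion is precisely the condition that $X$ is not $F$-split.

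I do not expect any real obstacle here: the degree bookkeeping is elementary, and the case $p = 2$ is the most direct of the three precisely because $p - 1 = 1$ collapses the power of $f$. The only point requiring a little care is the claim that the surviving monomials $sxy$ and $txy$ genuinely lie outside $I$; this is immediate once one recalls that $I$ is a monomial ideal, so membership can be tested monomial by monomial.
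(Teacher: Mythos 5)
Your proof is correct and follows exactly the paper's argument: apply Fedder's criterion with $p-1=1$, observe that every term of $f$ except $a_1xy$ lies in $I=(s^2,t^2,x^2,y^2)$ because $a_i \in (s^2,t^2)$ for $i \geq 2$, and conclude that $f \in I$ if and only if $a_1 = 0$. The extra details you supply (the degree bookkeeping and the monomial-ideal membership test) are the same reasoning the paper leaves implicit.
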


\begin{proof}
Using Lemma \ref{lem: Fedder}, we know that $X$ is not $F$-split if and only if $f \in (x^2,y^2,s^2,t^2) =: I$.
Since the $a_i$ are homogeneous of degree $i$ in $s$ and $t$, the only summand of $f$ that is not automatically in $I$ is $a_1xy$ and we have $a_1xy \in I$ if and only if $a_1 = 0$.
\end{proof}

The genus $1$ fibration associated to a surface $X$ with Equation \eqref{eq: dP1 general} has the following $j$-invariant and discriminant
\begin{eqnarray*}
\Delta &=& a_1^4(a_1^2a_6 + a_1a_3a_4 + a_2a_3^2 + a_4^2) + a_1^3a_3^3 + a_3^4\\
j &=& \frac{a_1^{12}}{\Delta}.
\end{eqnarray*}
If $X$ is smooth, then the fibration is elliptic, hence $\Delta \neq 0$, so asking that $a_1 = 0$ is the same asking that all smooth members of $|-K_X|$ are
supersingular.

Now, in characteristic $2$, the curve given by $a_1x - a_3$ is the branch locus of the double cover $X \to \mathbb{P}(1,1,2)$. This curve is a union of three (not necessarily distinct) lines through the vertex if and only if $a_1 = 0$. This proves Theorem \ref{thm: main theorem} (3).


\bibliographystyle{amsplain}
\bibliography{dP}
\end{document}